\documentclass[11pt,a4paper]{article}
\usepackage[margin=2.5cm]{geometry}
\usepackage{amsmath,amssymb,amsthm}
\usepackage{parskip}
\usepackage{hyperref}
\hypersetup{colorlinks=true,linkcolor=black,citecolor=black,urlcolor=black}

\newtheorem{theorem}{Theorem}[section]
\newtheorem{lemma}[theorem]{Lemma}
\newtheorem{proposition}[theorem]{Proposition}

\theoremstyle{definition}

\theoremstyle{remark}
\newtheorem{remark}[theorem]{Remark}

\newcommand{\E}{\mathbb{E}}
\newcommand{\R}{\mathbb{R}}
\newcommand{\GL}{G_\Lambda}
\newcommand{\Gtil}{\tilde{G}_\Lambda}

\newcommand{\hb}{\bar{h}}

\title{A 0-1 Law for Multifractal Spectra\\
via the HGDS Scale Derivative}
\author{J.\ Petkevi\v{c}ius}
\date{\today}

\begin{document}
\maketitle

\begin{abstract}
We prove that the multifractal spectrum $D(h,\omega)$ of a stochastic
process is almost surely deterministic under a scale decorrelation
condition on the HGDS scale derivative $\eth_\Lambda X$. The key
difficulty is that the pointwise H\"{o}lder exponent lives in the germ
$\sigma$-algebra, where classical 0-1 laws do not reach. We get around
this by working with the geometry accumulation integral $\GL$, which is
a genuine Lebesgue integral over scales and concentrates almost surely.
The boundary case --- log-correlated fields --- is sharp: the variance
summability condition fails exactly there.
\end{abstract}

\tableofcontents
\newpage

\section{Introduction}

\subsection{The problem}

Let $X:[0,1]\times\Omega\to\R$ be a stochastic process. The pointwise
H\"{o}lder exponent at $t$ is
\[
  h_X(t,\omega)
  = \liminf_{r\to0}\frac{\log|X(t+r,\omega)-X(t,\omega)|}{\log|r|},
\]
and the multifractal spectrum is
$D(h,\omega)=\dim_H\{t\in[0,1]:h_X(t,\omega)=h\}$. For most natural
processes --- fractional Brownian motion, L\'{e}vy processes --- the
spectrum is the same for almost every sample path. But there is no
general theorem that says when this happens. The question of finding a
condition $C(X)$ such that
\[
  C(X)\implies D(h,\omega)=\bar D(h)\quad\text{a.s.}
\]
is completely open \cite{jaffardseuret2022}.

\subsection{Why classical 0-1 laws fail}

The obvious approach is to use Kolmogorov or Hewitt-Savage. Both fail
for the same reason: $h_X(t,\omega)$ is defined by a lim inf as
$r\to0$, so it lives in the germ $\sigma$-algebra at $t$. Kolmogorov's
law needs tail events. Hewitt-Savage needs exchangeability. Neither is
available here.

\subsection{The approach}

The HGDS framework \cite{hgds1,hgds2,hgds3} splits increments at each
scale $\Lambda>0$:
\[
  X(t+\Lambda,\omega)-X(t,\omega)
  = \eth_\Lambda X(t,\omega)\cdot\Lambda + R_\Lambda X(t,\omega),
\]
where $\eth_\Lambda X$ is the macro scale derivative (from wavelet
truncation) and $R_\Lambda X$ is the fine-scale residual. Instead of
working with the lim inf directly, we work with
\[
  \GL(t,\omega) = \int_\Lambda^1|\eth_s X(t,\omega)|\cdot s^{-1}\,ds,
\]
which satisfies $h_X(t,\omega)=1+\lim_{\Lambda\to0}\log\GL(t,\omega)/
\log\Lambda$. Since $\GL$ is a Lebesgue integral over scales rather
than a lim inf, it is accessible to concentration arguments.

\subsection{Main result}

We say $X$ has \emph{locally stationary increments} if for each
$t\in[0,1]$, the map $t\mapsto\E[f(X(t+r)-X(t))]$ is differentiable
for every bounded continuous $f$ and every $r>0$. This is slightly
weaker than stationary increments and covers the genuinely multifractal
case where $\hb(t)$ varies.

\begin{theorem}[0-1 Law]\label{thm:main}
Let $X:[0,1]\times\Omega\to\R$ have locally stationary increments, be
jointly measurable, and have HGDS macro derivative $\eth_\Lambda X$
defined via wavelet decomposition with parameter $H\in(0,1)$. Write
$\hb(t)=\E[h_X(t,\cdot)]$ and $\bar D(h)=\dim_H\{t:\hb(t)=h\}$.

Suppose:
\begin{align}
  &\sum_{n=1}^\infty
  \frac{\int_{e^{-n}}^1\!\int_{e^{-n}}^1
    |\mathrm{Cov}(\eth_s X(t),\eth_u X(t))|\,(su)^{-1}\,ds\,du}
  {\bigl(\int_{e^{-n}}^1\E[|\eth_s X(t)|]s^{-1}\,ds\bigr)^2}
  <\infty\quad\text{uniformly in }t,\tag{C-i}\\
  &\E\bigl[|\eth_s X(t)-\eth_s X(t')|^p\bigr]
  \le C|t-t'|^{pH}s^{p(H-1)}
  \;\text{ for some }p>2/H,\tag{C-ii}\\
  &\E\bigl[|\eth_s X(t)|\bigr]\asymp s^{\hb(t)-1}
  \text{ as }s\to0,\;\text{uniformly in }t,\tag{C-iii}\\
  &\hb(t)\text{ is not constant on any interval.}\tag{C-iv}
\end{align}
Then $D(h,\omega)=\bar D(h)$ a.s.\ for all $h\ge0$.
\end{theorem}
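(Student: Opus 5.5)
The plan is to prove that, almost surely, $h_X(t,\omega)=\hb(t)$ holds simultaneously for all $t\in[0,1]$. Once this is in hand the level sets $\{t:h_X(t,\omega)=h\}$ and $\{t:\hb(t)=h\}$ coincide for every $h$ at once, so $D(h,\omega)=\bar D(h)$ a.s. The route goes through the identity $h_X(t,\omega)=1+\lim_{\Lambda\to0}\log\GL(t,\omega)/\log\Lambda$ stated in the introduction. It therefore suffices to show that $\log\GL(t,\omega)/\log\Lambda$ has the same limit as $\log\E[\GL(t)]/\log\Lambda$, uniformly in $t$, and that the latter limit equals $\hb(t)-1$.

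\textbf{Step 1 (pointwise concentration along $\Lambda_n=e^{-n}$).} Fix $t$ and write $m_n(t)=\E[G_{\Lambda_n}(t)]=\int_{e^{-n}}^1\E|\eth_sX(t)|s^{-1}ds$. Bound $\mathrm{Var}(G_{\Lambda_n}(t))$ by the double covariance integral appearing in (C-i); the covariance of the absolute values is controlled by that of $\eth_s X$ itself, which needs a small Gaussian-type or decoupling remark and is presumably where the paper's framework enters. Chebyshev then gives $P(|G_{\Lambda_n}/m_n-1|>1/2)\le 4\,\mathrm{Var}/m_n^2$, and by (C-i) this is summable in $n$. Borel--Cantelli yields $G_{\Lambda_n}(t)\asymp m_n(t)$ eventually, a.s.

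Monotonicity of $\Lambda\mapsto\GL$ lets us interpolate between $e^{-n-1}$ and $e^{-n}$. From (C-iii), $m_n(t)\asymp\int_{e^{-n}}^1 s^{\hb(t)-2}ds$, which is $\asymp e^{n(1-\hb(t))}$ when $\hb<1$. So $\log\GL/\log\Lambda\to\hb(t)-1$ and $h_X(t)=\hb(t)$ a.s. for each fixed $t$. The case $\hb\ge1$ needs a separate remark, since the integral then stays bounded or grows only logarithmically.

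\textbf{Step 2 (uniformity in $t$).} This is the main obstacle, because the exceptional null set depends on $t$ and there are uncountably many $t$. I would take a dyadic grid $t\in 2^{-k}\mathbb{Z}\cap[0,1]$ with $k=k(n)\approx cn$ and apply Step 1 with a union bound, using uniformity in (C-i) together with a fourth-moment or tail strengthening so that $2^{k(n)}\cdot(\text{tail}_n)$ remains summable. Chebyshev alone will not suffice here. I would first try to obtain the stronger tail from (C-ii) via Rosenthal-type bounds; failing that, I would assume (C-i) can be upgraded.

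Between grid points, (C-ii) with $p>2/H$ and a Kolmogorov--Chentsov/Garsia--Rodemich--Rumsey argument in $t$ gives the following. Almost surely, $|\eth_sX(t)-\eth_sX(t')|\le C(\omega)|t-t'|^{H-\varepsilon}s^{H-1}$ uniformly over $s$ along the grid. Integrating against $s^{-1}$ gives an error of order $2^{-k(H-\varepsilon)}\Lambda_n^{H-1}$ in $\GL$. Choosing $k(n)$ large relative to $n$ makes this $o(m_n)$, since $m_n\gtrsim e^{n(1-\hb)}$ and $\hb$ is bounded below by uniformity in (C-iii). Continuity of $\hb$, obtained from local stationarity and (C-iii), transfers the exponent to off-grid $t$.

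\textbf{Step 3 (conclusion).} On the full-measure event from Step 2, $h_X(t,\omega)=\hb(t)$ for all $t$, so $D(h,\omega)=\bar D(h)$ for all $h\ge0$. Condition (C-iv) is not needed for the equality itself. I would use it only to ensure the level sets of $\hb$ are non-degenerate, so that the statement is nontrivial and $\bar D$ genuinely varies. I would flag Step 2, and specifically the need for tail bounds better than Chebyshev to survive the union bound, as the point most likely to require additional hypotheses.
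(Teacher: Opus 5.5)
Your architecture matches the paper's: pointwise concentration as in Lemma~\ref{lem:ptwise}, then a grid plus Kolmogorov as in Lemma~\ref{lem:holder} and Proposition~\ref{prop:unif}. But Step~2 is a genuine gap, as you suspect, and the repairs you propose do not close it. A union bound over the $2^{k(n)}$ points of a grid with $k(n)\approx cn$ needs $\sum_n 2^{cn}\sup_t\mathbb{P}(|\tilde G_{e^{-n}}(t)-1|>\varepsilon)<\infty$. Chebyshev with (C-i) gives only $\sum_n\sup_t\mathrm{Var}(\tilde G_{e^{-n}}(t))<\infty$, and a variance of order $n^{-2}$ is allowed. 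Your suggested fixes do not help:
(C-ii) controls the spatial increments $\eth_sX(t)-\eth_sX(t')$ and says nothing about the fluctuation of $\GL(t)$ about its mean at fixed $t$. A Rosenthal-type bound needs a sum of independent or martingale-difference terms that the hypotheses do not provide. Strengthening (C-i) changes the theorem.

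Nor can the fine grid be avoided. Jensen and (C-ii) give $\E|\eth_sX(t)|\le\E|\eth_sX(t')|+C|t-t'|^Hs^{H-1}$. Comparing with (C-iii) as $s\to0$, if $\hb(t_0)<H$ at one point then $\hb\le\hb(t_0)$ everywhere, and repeating the argument at every point makes $\hb$ constant. So under (C-iv), $\hb\ge H$ everywhere and $\hb>H$ on a dense set. The same comparison is incidentally what gives continuity of $\hb$; local stationarity plays no role there. At a point with $\hb(t)>H$, the oscillation bound (C-ii) provides between grid points of spacing $\delta$ is of order $\delta^\beta\Lambda_n^{H-1}$ (Minkowski plus Kolmogorov). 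This swamps $\E[G_{\Lambda_n}(t)]\asymp\Lambda_n^{\hb(t)-1}$ unless $\delta^\beta\lesssim\Lambda_n^{\hb(t)-H}$, so the grid must have $e^{cn}$ points with $c>0$. The missing ingredient is a concentration estimate for $\tilde G_{e^{-n}}(t)$, uniform in $t$, with tails summable against that many points. Nothing in (C-i)--(C-iv) provides it, and this is where the real difficulty of the theorem lies.

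The paper's Proposition~\ref{prop:unif} sidesteps the growing union bound by fixing $\delta$ independently of $n$. Only $\lceil\delta^{-1}\rceil$ points enter, and Chebyshev with (C-i) suffices. It then controls the oscillation between grid points via Lemma~\ref{lem:holder}, with a constant $C(\omega)$ treated as uniform in $n$. That is the same obstruction in another form. Lemma~\ref{lem:holder} carries the factor $\Lambda^{H-1}$, so after dividing by $\E[G_{\Lambda_n}]\asymp\Lambda_n^{\hb-1}$ the H\"{o}lder bound it yields for $\tilde G_{\Lambda_n}$ is of order $\Lambda_n^{H-\hb}$. This is bounded in $n$ only where $\hb\le H$, which fails on a dense set by the computation above; moreover $\delta$ is chosen from the random constant. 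So your diagnosis is accurate, and the paper does not supply the missing estimate either.

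Where your plan genuinely improves on the paper is the endgame. Once $h_X(\cdot,\omega)=\hb$ holds on a single full-measure event, the level sets coincide for every $h$ simultaneously. The paper's Lemmas~\ref{lem:hbarholder}--\ref{lem:frostman} (H\"{o}lder continuity of $\hb$, the ball bound, the occupation-measure Frostman construction) and condition (C-iv) then become unnecessary. The paper's own upper bound, via $E(h,\omega)\subseteq\{|\hb-h|<2\varepsilon\}$, could not reach $\bar D(h)$ anyway: for continuous $\hb$ these sets are open, hence of dimension $1$ when non-empty.

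One smaller point about Step~1. Passing from $\mathrm{Cov}(|\eth_sX|,|\eth_uX|)$ to $|\mathrm{Cov}(\eth_sX,\eth_uX)|$ is valid for centred jointly Gaussian pairs but false in general; take $\eth_uX=\epsilon\,\eth_sX$ with $\epsilon$ an independent random sign. The paper simply asserts it as an identity in Lemma~\ref{lem:varsum}.
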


\begin{remark}
Conditions (C-i)--(C-iv) hold for fractional Brownian motion,
L\'{e}vy stable processes with finite variance, and standard
multifractal cascades. Log-correlated fields fail (C-i) --- this is
sharp; see Section~\ref{sec:sharp}.

The condition $p>2/H$ in (C-ii) (rather than $p>1/H$) is needed for
Kolmogorov's continuity criterion in the proof of
Lemma~\ref{lem:holder}. For $H\le1/2$ this is automatic from $p>1/H$
since $1/H\ge2$. For $H>1/2$ the stronger condition is genuinely
required.
\end{remark}

\section{Measurability}

\begin{proposition}\label{prop:meas}
$\omega\mapsto D(h,\omega)$ is $(\mathcal{F},\mathcal{B}(\R))$-measurable
for each fixed $h\ge0$.
\end{proposition}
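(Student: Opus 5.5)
The plan is to show that $\omega\mapsto D(h,\omega)$ is a countable combination of $\mathcal{F}$-measurable quantities. We avoid projecting the jointly measurable set $\{(t,\omega):h_X(t,\omega)=h\}$ onto $\Omega$, since that route only gives universal measurability.

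\emph{Step 1: path regularity.} By (C-ii) with $p>2/H$ and Kolmogorov's criterion, used as in Lemma~\ref{lem:holder}, we fix a version of $X$, and of $(s,t)\mapsto\eth_sX(t)$ for $s$ in compact subintervals of $(0,1]$, that is continuous in $t$. Then $\GL(t,\omega)$ is continuous in $t$ and $\mathcal{F}$-measurable in $\omega$. Through the identity $h_X=1+\lim_{\Lambda\to0}\log\GL/\log\Lambda$, with $\Lambda$ restricted to $e^{-n}$ by monotonicity of $\GL$ in $\Lambda$, the level set takes the form
\[
E_h(\omega)=\bigcap_{k}\bigcup_{m}\bigcap_{n\ge m}F_{k,n}(\omega),
\qquad
F_{k,n}(\omega)=\Bigl\{t:\Bigl|\tfrac{\log G_{e^{-n}}(t,\omega)}{-n}+1-h\Bigr|\le\tfrac1k\Bigr\}.
\]
Each $F_{k,n}(\omega)$ is closed, and for every compact $K$ the event $\{K\cap F_{k,n}(\omega)=\emptyset\}$ lies in $\mathcal{F}$. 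This holds because, by continuity, emptiness is equivalent to a strict inequality holding on a finite dyadic grid of $K$ together with a bound on the modulus of continuity of $G_{e^{-n}}$. That modulus is itself a countable supremum over rational pairs.

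\emph{Step 2: dimension through countable data.} We write $D(h,\omega)=\sup_{k}\sup_{m}\dim_H\bigl(\bigcap_{n\ge m}F_{k,n}(\omega)\bigr)$ up to the outer intersection over $k$. The sets $C_{k,m}(\omega)=\bigcap_{n\ge m}F_{k,n}(\omega)$ are compact. For compact sets, $\dim_H C=\inf\{s\in\mathbb{Q}:\mathcal{H}^s_\infty(C)=0\}$. The statement $\mathcal{H}^s_\infty(C)<\eta$ is equivalent to the existence of a \emph{finite} family of open dyadic-rational intervals $U_1,\dots,U_N$ with $\sum|U_i|^s<\eta$ and $C\cap(\bigcup U_i)^c=\emptyset$. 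There are countably many such families. For each one, the emptiness condition is a decreasing intersection of compacts being empty, hence equivalent to $\bigcap_{m\le n\le M}F_{k,n}\cap(\bigcup U_i)^c=\emptyset$ for some finite $M$. By Step 1 this event is in $\mathcal{F}$. So $\omega\mapsto\dim_H C_{k,m}(\omega)$ is $\mathcal{F}$-measurable.

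\emph{Step 3: pass to $E_h$.} This is the main obstacle: Hausdorff dimension does not commute with the intersection over $k$ in general. I would handle it by showing that the $k$-intersection may be replaced by a single-parameter limit. Set $E^{(k)}=\bigcup_mC_{k,m}$; these sets decrease in $k$ with intersection $E_h$. By countable stability, $\dim_HE^{(k)}=\sup_m\dim_HC_{k,m}$, which is measurable, and $\dim_HE_h\le\inf_k\dim_HE^{(k)}$. For the reverse inequality I would use (C-iii) and (C-iv). Uniform comparability of $\E|\eth_s X|$ with $s^{\hb(t)-1}$ and the concentration of $\GL$ give that $h_X(\cdot,\omega)$ agrees with the continuous function $\hb$ on a full-dimension set. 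Together with the non-flatness in (C-iv), the sets $\{|\hb-h|\le1/k\}$ shrink with dimension converging to $\dim_H\{\hb=h\}$. This forces $\inf_k\dim_HE^{(k)}=D(h,\omega)$ on a set of $\omega$ in $\mathcal{F}$ (not merely in its completion), and $D(h,\cdot)$ is then an infimum of measurable functions. If this reverse inequality fails, the fallback is to prove measurability of $\inf_k\dim_HE^{(k)}$ and replace $D(h,\omega)$ in the main theorem by this regularized spectrum.
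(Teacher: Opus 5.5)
Your proposal breaks down at Step~3, and the repair you sketch there cannot work. The reverse inequality $\dim_H E_h(\omega)\ge\inf_k\dim_H E^{(k)}(\omega)$ is false in exactly the regime you appeal to. If $\hb$ is continuous and $h\in\hb([0,1])$, then $\{|\hb-h|<1/k\}$ is a non-empty open set, so $\dim_H\{|\hb-h|\le1/k\}=1$ for every $k$. These thickenings therefore do not have dimension tending to $\dim_H\{\hb=h\}$, and (C-iv) has no bearing on this. The same happens to your sets: the concentration you invoke (even only pointwise in $t$, combined with Fubini) puts a positive-Lebesgue-measure subset of $\{|\hb-h|<1/k\}$ inside $E^{(k)}(\omega)$. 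Hence $\inf_k\dim_H E^{(k)}(\omega)=1$ a.s., whereas $D(h,\omega)$ is supposed to equal $\bar D(h)$, which is $0$ if, say, $\hb$ is strictly monotone. So your ``regularized spectrum'' is a different and essentially trivial quantity, and substituting it into the main theorem changes the result rather than proving the proposition. Moreover, even if equality held on some full-measure $\Omega_0\in\mathcal{F}$, you would only get measurability of $D(h,\cdot)$ up to a null set, i.e.\ with respect to the completion. That is exactly what you set out to avoid.

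The missing ingredient is the one the paper uses: completeness of $(\Omega,\mathcal{F},\mathbb{P})$. The paper's argument needs only joint measurability of $h_X$, not (C-i)--(C-iv). The set $\{(t,\omega):h_X(t,\omega)=h\}$ lies in $\mathcal{B}([0,1])\otimes\mathcal{F}$, events about its sections are projections of product-measurable sets, hence analytic, hence in $\mathcal{F}$ by completeness.

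Avoiding completion is not only unnecessary but impossible at this level of generality. Take a Borel $B\subseteq[0,1]^2$ whose projection onto the second coordinate is not Borel. Let $\phi:[0,1]\to[0,1]$ extract the odd-indexed binary digits, so every fibre has Hausdorff dimension $1/2$. Then $A=\{(z,\omega):(\phi(z),\omega)\in B\}$ is Borel, and $\{\omega:\dim_H A_\omega\ge1/2\}$ is that non-Borel projection.

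Your Steps~1--2 correctly handle the compact pieces $C_{k,m}(\omega)$. Passing to the $F_{\sigma\delta}$ set $E_h(\omega)$, however, needs a projection or capacitability argument (e.g.\ with the Choquet capacity $\mathcal{H}^s_\infty$), which gives universal measurability and then relies on completeness.

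Step~1 has further problems:
\begin{itemize}
\item Replacing $X$ or $\GL$ by a continuous modification changes the pathwise object $h_X(t,\omega)$, and hence the very set $E(h,\omega)$ whose dimension is at stake.
\item The identity $h_X=1+\lim_{\Lambda\to0}\log\GL/\log\Lambda$ would have to hold for every $(t,\omega)$, not merely almost surely.
\item (C-ii) controls $\eth_sX$, not $X$.
\end{itemize}
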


\begin{proof}
Write
$h_X(t,\omega)=\sup_{n\in\mathbb{N}}\inf_{r\in\mathbb{Q}\cap(0,1/n)}
\frac{\log|X(t+r,\omega)-X(t,\omega)|}{\log|r|}$,
a countable sup/inf of measurable functions, jointly measurable in
$(t,\omega)$.

Set $A=\{(t,\omega):h_X(t,\omega)=h\}\in\mathcal{B}([0,1])\otimes\mathcal{F}$.
For open $U\subseteq[0,1]$:
$\{\omega:E(h,\omega)\cap U\ne\emptyset\}=\pi_\Omega(A\cap(U\times\Omega))$
is the projection of a Borel set, hence analytic, hence measurable by
completeness of $(\Omega,\mathcal{F},\mathbb{P})$.

The map $\omega\mapsto\mathcal{H}^s_\delta(E(h,\omega))$ is measurable
as a countable infimum over rational covers. Passing $\delta\to0$ and
using $\{\omega:\dim_H E(h,\omega)<c\}=\bigcup_{s\in\mathbb{Q}\cap(0,c)}
\{\omega:\mathcal{H}^s(E(h,\omega))=0\}$ gives the result. \qed
\end{proof}

\section{Concentration of $G_\Lambda$}

\subsection{Pointwise concentration}

Write $\Gtil=\GL/\E[\GL]$.

\begin{lemma}\label{lem:varsum}
Under (C-i), $\sum_{n=1}^\infty\mathrm{Var}(\tilde G_{e^{-n}}(t))<\infty$
for all $t$.
\end{lemma}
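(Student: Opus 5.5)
The plan is to compute $\mathrm{Var}(\tilde G_{e^{-n}}(t))$ explicitly and show that its $n$-th term is dominated by the $n$-th summand in (C-i). Fix $t$ and $\Lambda=e^{-n}$. Since $\Gtil=\GL/\E[\GL]$ and $\E[\GL]$ is a deterministic positive constant,
\[
  \mathrm{Var}(\Gtil(t))=\frac{\mathrm{Var}(\GL(t))}{(\E[\GL(t)])^2}.
\]
By joint measurability and Tonelli, the denominator is
\[
  \E[\GL(t)]=\int_\Lambda^1\E[|\eth_sX(t)|]\,s^{-1}\,ds,
\]
which is exactly the quantity inside the square in the denominator of (C-i). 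It is finite and positive for $n\ge1$ by (C-iii). So only the numerator needs work.

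For the numerator, write $Y_s=|\eth_sX(t)|$. Expanding the square and using Fubini gives
\[
  \mathrm{Var}(\GL(t))=\int_\Lambda^1\!\int_\Lambda^1\mathrm{Cov}(Y_s,Y_u)\,(su)^{-1}\,ds\,du,
\]
provided $\E[(\int_\Lambda^1 Y_s s^{-1}ds)^2]<\infty$. This holds because $\int\!\int\E[Y_sY_u](su)^{-1}\,ds\,du$ is bounded by the covariance integral plus $(\E\GL)^2$. We then bound the integrand by $|\mathrm{Cov}(Y_s,Y_u)|$. The one real step, and the main obstacle, is passing from the covariance of the absolute values $Y_s,Y_u$ to the covariance of $\eth_sX(t),\eth_uX(t)$ that appears in (C-i). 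In general $|\mathrm{Cov}(|A|,|B|)|\le|\mathrm{Cov}(A,B)|$ is false; for example, $A$ and $B$ can be uncorrelated while $|A|$ and $|B|$ are correlated.

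I would handle this obstacle in one of two ways.

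\emph{Route 1 (Gaussian case).} If $\eth_sX(t)$ is centered Gaussian, as it is for wavelet truncations of fBm, there is an explicit formula. For jointly Gaussian $A,B$ with correlation $\rho$,
\[
  \mathrm{Cov}(|A|,|B|)=\tfrac{2}{\pi}\sigma_A\sigma_B\bigl(\rho\arcsin\rho+\sqrt{1-\rho^2}-1\bigr).
\]
Since $0\le \rho\arcsin\rho+\sqrt{1-\rho^2}-1\le \rho^2\le|\rho|$, this gives $|\mathrm{Cov}(|A|,|B|)|\le\tfrac{2}{\pi}|\mathrm{Cov}(A,B)|$.

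\emph{Route 2 (general case).} Read (C-i) as a condition on the covariance of the integrands of $\GL$, namely of $|\eth_sX(t)|$. This is the natural reading, because the denominator of (C-i) already involves $|\eth_sX|$. Under this reading the bound $|\mathrm{Cov}(Y_s,Y_u)|$ is exactly the numerator of (C-i).

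I would state that the lemma is proved under this interpretation, or with a constant $2/\pi$ in the Gaussian case. Either way we obtain, with $c=1$ under Route 2 and $c=2/\pi$ under Route 1,
\[
  \mathrm{Var}(\tilde G_{e^{-n}}(t))\le c\,
  \frac{\int_{e^{-n}}^1\!\int_{e^{-n}}^1|\mathrm{Cov}(\eth_sX(t),\eth_uX(t))|(su)^{-1}\,ds\,du}{\bigl(\int_{e^{-n}}^1\E[|\eth_sX(t)|]s^{-1}ds\bigr)^2}.
\]

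Summing over $n$ and applying (C-i) gives $\sum_n\mathrm{Var}(\tilde G_{e^{-n}}(t))<\infty$ for every $t$. The bound is uniform in $t$ because (C-i) is. This uniformity is what the later Borel--Cantelli and Chebyshev step, which yields $\tilde G_{e^{-n}}(t)\to1$ a.s., will need.
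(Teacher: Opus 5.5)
Your computation is the one the paper uses, but the paper compresses it into a single line asserting that $\mathrm{Var}(\Gtil(t))$ \emph{equals} the $n$-th summand of (C-i). Your objection to that step is correct rather than over-cautious. The identity fails in two ways. First, the integrand of $\GL$ is $|\eth_sX(t)|$, so the covariances that actually occur are $\mathrm{Cov}(|\eth_sX(t)|,|\eth_uX(t)|)$. Second, putting absolute values on them can only give ``$\le$'', as you wrote.

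Moreover, with (C-i) read literally the lemma can fail, so no proof can avoid an extra assumption. Nothing in (C-i) rules out the following family, with $h\in(0,1)$:
\begin{itemize}
\item $\eth_sX(t)=R\,s^{h-1}r_{k(s)}(U)$;
\item $R\ge0$ with $\E R=\mathrm{Var}\,R=1$;
\item $U$ uniform and independent of $R$;
\item the $r_k$ are Rademacher functions, and $k(s)=\lfloor(\log(1/s))^3\rfloor$.
\end{itemize}
Then $|\eth_sX(t)|=R\,s^{h-1}$, so $\Gtil=R$ and $\mathrm{Var}(\Gtil)=1$ for every $\Lambda$. Yet $\mathrm{Cov}(\eth_sX(t),\eth_uX(t))=2(su)^{h-1}\mathbf{1}[k(s)=k(u)]$. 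In $x=\log(1/s)$ the weight becomes $e^{(1-h)x}\,dx$, which concentrates near $x=n$. The blocks $\{k=\mathrm{const}\}$ have width $O(x^{-2})$ there, so the $n$-th summand of (C-i) is $O(n^{-2})$. In addition, (C-iii) holds with exponent $h$.

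So your Route 2 --- stating (C-i) for the covariances of $|\eth_sX(t)|$ --- is the repair the paper actually needs, and with it the lemma is immediate. Your Route 1 is also correct: the bracket equals $\int_0^{|\rho|}\arcsin x\,dx\in[0,\tfrac{\pi}{4}\rho^2]$. But it only rescues the literal statement for centered Gaussian $\eth$ such as fBm, not for the stable or cascade examples the paper claims.

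One small correction: (C-iii) is only asymptotic as $s\to0$. It gives $\E[\GL(t)]>0$ for large $n$ but says nothing about $s$ near $1$. That $0<\E[\GL(t)]<\infty$ for every $n$ has to be taken as implicit in (C-i) and in the definition of $\Gtil$.
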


\begin{proof}
Direct computation:
\[
  \mathrm{Var}(\Gtil(t))
  = \frac{\int_\Lambda^1\!\int_\Lambda^1
    |\mathrm{Cov}(\eth_s X(t),\eth_u X(t))|\,(su)^{-1}\,ds\,du}
  {\bigl(\int_\Lambda^1\E[|\eth_s X(t)|]s^{-1}\,ds\bigr)^2},
\]
which is the summand in (C-i) at $\Lambda=e^{-n}$. \qed
\end{proof}

\begin{lemma}\label{lem:ptwise}
Under (C-i), $\tilde G_{e^{-n}}(t,\omega)\to1$ a.s.\ for each fixed $t$.
\end{lemma}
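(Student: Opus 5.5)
The plan is the standard Chebyshev--Borel--Cantelli argument along the geometric sequence $\Lambda_n=e^{-n}$, fed by Lemma~\ref{lem:varsum}. Fix $t\in[0,1]$ and $\varepsilon>0$. By construction $\E[\tilde G_{e^{-n}}(t)]=1$ for every $n$, as long as $\E[G_{e^{-n}}(t)]\in(0,\infty)$. Positivity and finiteness of this normalising constant follow from (C-iii): $\E|\eth_s X(t)|\asymp s^{\hb(t)-1}$ is integrable against $s^{-1}\,ds$ on $[e^{-n},1]$ and not identically zero. Fubini, which uses joint measurability, lets us write $\E[G_\Lambda]=\int_\Lambda^1\E|\eth_sX(t)|\,s^{-1}\,ds$.

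Next I apply Chebyshev's inequality:
\[
\mathbb{P}\bigl(|\tilde G_{e^{-n}}(t)-1|>\varepsilon\bigr)\le \varepsilon^{-2}\,\mathrm{Var}\bigl(\tilde G_{e^{-n}}(t)\bigr).
\]
By Lemma~\ref{lem:varsum} the right-hand side is summable in $n$. The first Borel--Cantelli lemma then gives, almost surely, $|\tilde G_{e^{-n}}(t)-1|\le\varepsilon$ for all but finitely many $n$. No independence across $n$ is needed.

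Finally I take $\varepsilon=1/k$ for $k\in\mathbb{N}$ and intersect the resulting countably many full-measure events. This yields $\tilde G_{e^{-n}}(t,\omega)\to1$ almost surely, with the null set depending on $t$, which is exactly what ``for each fixed $t$'' allows. Uniformity in $t$ is not needed here; it is presumably reserved for the later chaining step through Lemma~\ref{lem:holder}.

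The main obstacle is not the probabilistic step but the input it relies on. Lemma~\ref{lem:varsum} identifies $\mathrm{Var}(\tilde G_\Lambda)$ with the summand of (C-i). Strictly, the variance of $\int|\eth_sX|s^{-1}ds$ involves $\mathrm{Cov}(|\eth_sX(t)|,|\eth_uX(t)|)$, not $\mathrm{Cov}(\eth_sX(t),\eth_uX(t))$. I would take Lemma~\ref{lem:varsum} as given, as the statement permits. If a self-contained justification is wanted, I would either read (C-i) as referring to the covariance of the absolute values, or bound $|\mathrm{Cov}(|\xi|,|\eta|)|$ by a constant times $|\mathrm{Cov}(\xi,\eta)|$ in the jointly Gaussian case. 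The second route goes through the explicit arcsine-type formula for $\E|\xi||\eta|$. The only other point to check is that $\mathrm{Var}(G_\Lambda)<\infty$ so that Chebyshev applies, and this is implicit in the finiteness of each summand of (C-i).
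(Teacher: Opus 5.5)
Your argument is the paper's: Chebyshev with Lemma~\ref{lem:varsum}, then the first Borel--Cantelli lemma along $\Lambda_n=e^{-n}$, and a countable intersection over $\varepsilon=1/k$. The paper also cites monotonicity of $G_\Lambda$ in $\Lambda$, which the statement as written does not need, since it concerns only the sequence $e^{-n}$. Your caveat that $\mathrm{Var}(G_\Lambda)$ really involves $\mathrm{Cov}(|\eth_sX(t)|,|\eth_uX(t)|)$ rather than $|\mathrm{Cov}(\eth_sX(t),\eth_uX(t))|$ is a fair criticism of Lemma~\ref{lem:varsum}, which both proofs take as input, not a gap in this step.
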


\begin{proof}
Chebyshev gives
$\sum_n\mathbb{P}(|\tilde G_{e^{-n}}(t)-1|>\varepsilon)
\le\varepsilon^{-2}\sum_n\mathrm{Var}(\tilde G_{e^{-n}}(t))<\infty$
by Lemma~\ref{lem:varsum}. Borel-Cantelli and monotonicity of $\GL$
in $\Lambda$ complete the proof. \qed
\end{proof}

\subsection{Uniform concentration}

\begin{lemma}\label{lem:holder}
Under (C-ii) with $p>2/H$, $t\mapsto\GL(t,\omega)$ is a.s.\
$\beta$-H\"{o}lder with $\beta=H-1/p>0$:
\[
  \E\bigl[|\GL(t)-\GL(t')|^p\bigr]^{1/p}
  \le C|t-t'|^H\Lambda^{H-1}.
\]
\end{lemma}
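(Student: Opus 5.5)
The plan is a direct $L^p$ estimate via Minkowski's integral inequality, followed by Kolmogorov's continuity criterion for each fixed $\Lambda$.

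\textbf{Step 1: reduce to the scale derivative.} For fixed $\Lambda\in(0,1)$ and $t,t'\in[0,1]$, the reverse triangle inequality gives, pointwise in $\omega$,
\[
  |\GL(t)-\GL(t')|
  \le \int_\Lambda^1 \bigl||\eth_s X(t)|-|\eth_s X(t')|\bigr|\,s^{-1}\,ds
  \le \int_\Lambda^1 |\eth_s X(t)-\eth_s X(t')|\,s^{-1}\,ds .
\]

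\textbf{Step 2: the moment bound.} I take $L^p(\mathbb{P})$ norms and apply Minkowski's integral inequality to move the norm inside the $ds$-integral. This is legitimate because $(s,\omega)\mapsto\eth_s X(t,\omega)$ is jointly measurable. That measurability should follow from the joint measurability of $X$ and the wavelet construction of $\eth_\Lambda X$; I would state it explicitly rather than leave it implicit. Applying (C-ii) inside the integral gives
\[
  \E\bigl[|\GL(t)-\GL(t')|^p\bigr]^{1/p}
  \le C^{1/p}|t-t'|^{H}\int_\Lambda^1 s^{H-1}\,s^{-1}\,ds
  = \frac{C^{1/p}}{1-H}\,|t-t'|^{H}\bigl(\Lambda^{H-1}-1\bigr).
\]
Since $H<1$, the right-hand side is at most $C'|t-t'|^H\Lambda^{H-1}$, which is the displayed estimate of Lemma~\ref{lem:holder}.

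\textbf{Step 3: Kolmogorov.} Raising Step 2 to the $p$-th power gives, for fixed $\Lambda$,
\[
  \E|\GL(t)-\GL(t')|^p\le C_\Lambda|t-t'|^{1+(pH-1)}, \qquad C_\Lambda=C'^p\Lambda^{p(H-1)} .
\]
Since $p>2/H$, we have $pH-1>1>0$. Kolmogorov's continuity criterion on $[0,1]$ then yields a modification of $t\mapsto\GL(t,\omega)$ that is a.s.\ $\gamma$-Hölder for every $\gamma<(pH-1)/p=H-1/p$.

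To get the statement simultaneously for all scales, I restrict first to the countable family $\Lambda=e^{-n}$, as in Lemma~\ref{lem:ptwise}, and intersect the full-measure events. For intermediate $\Lambda$, I would use monotonicity of $\GL$ in $\Lambda$: for $e^{-n-1}\le\Lambda\le e^{-n}$ one has $G_{e^{-n}}\le\GL\le G_{e^{-n-1}}$, though sandwiching controls values rather than Hölder constants. Alternatively I would write $\GL=G_{e^{-n-1}}-\int_{e^{-n-1}}^{\Lambda}|\eth_sX|\,s^{-1}ds$ and apply the same Minkowski bound to the remainder.

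\textbf{Main obstacle.} The delicate point is the exponent bookkeeping in Step 3. Kolmogorov gives every $\gamma<H-1/p$, not the endpoint $\beta=H-1/p$ itself. I would therefore either state the conclusion as ``$\beta$-Hölder for every $\beta<H-1/p$'', or obtain the endpoint by first applying the argument with a slightly larger moment exponent, if (C-ii) is available for some $p'>p$. A second, milder issue is that the Hölder constant blows up like $\Lambda^{H-1}$ as $\Lambda\to0$. This is harmless here, since the lemma is stated for each fixed $\Lambda$, but it must be tracked when the lemma is later combined with Lemma~\ref{lem:ptwise}.
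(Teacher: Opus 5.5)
Your argument is correct and is the paper's own route: the reverse triangle inequality (which the paper uses implicitly), then Minkowski's integral inequality with (C-ii) to get the $C|t-t'|^H\Lambda^{H-1}$ moment bound, then Kolmogorov's criterion for each fixed $\Lambda$. Your caveats are right and sharper than the paper's proof. Kolmogorov only needs $pH>1$, and it only gives a modification that is $\gamma$-H\"{o}lder for every $\gamma<H-1/p$. So the endpoint $\beta=H-1/p$, and the paper's ``$p\beta>1$ iff $p>2/H$'' justification, are not delivered by this argument. Also, as you note, the H\"{o}lder constant degenerates like $\Lambda^{H-1}$, so it is not uniform in $\Lambda$.
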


\begin{proof}
By Minkowski's inequality in $L^p$, then (C-ii):
\begin{align*}
  \E\bigl[|\GL(t)-\GL(t')|^p\bigr]^{1/p}
  &\le\int_\Lambda^1\E\bigl[|\eth_s X(t)-\eth_s X(t')|^p\bigr]^{1/p}
    s^{-1}\,ds\\
  &\le C|t-t'|^H\int_\Lambda^1 s^{H-2}\,ds
   = C|t-t'|^H\cdot\frac{\Lambda^{H-1}-1}{1-H}\\
  &\le C'|t-t'|^H\Lambda^{H-1}.
\end{align*}
Kolmogorov's continuity criterion applies: the H\"{o}lder exponent
is $\beta=H-1/p>0$, and $p\beta=pH-1>1$ iff $p>2/H$, which holds by
(C-ii). \qed
\end{proof}

\begin{proposition}\label{prop:unif}
Under (C-i) and (C-ii): $\sup_{t\in[0,1]}|\tilde G_{e^{-n}}(t,\omega)-1|
\to0$ a.s.
\end{proposition}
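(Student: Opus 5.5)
The plan is a chaining argument: upgrade the pointwise almost-sure convergence of Lemma~\ref{lem:ptwise} to uniform convergence over a finite grid, then control oscillations between grid points with the H\"older bound of Lemma~\ref{lem:holder}. Fix $\Lambda_n=e^{-n}$ and let $m_n(t)=\E[G_{\Lambda_n}(t)]$. By (C-iii), $m_n(t)\asymp\int_{\Lambda_n}^1 s^{\hb(t)-2}\,ds$ uniformly in $t$. Provided $\hb<1$, this gives $m_n(t)\gtrsim \Lambda_n^{\hb(t)-1}$, so $m_n$ grows like a power of $e^{n}$. For each $n$ I would choose a grid $T_n=\{k\delta_n:0\le k\le\delta_n^{-1}\}$ with mesh $\delta_n=e^{-\gamma n}$, where $\gamma>0$ is large and fixed later. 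The decomposition is
\[
\sup_t|\tilde G_{\Lambda_n}(t)-1|\le \max_{t\in T_n}|\tilde G_{\Lambda_n}(t)-1|+\sup_{|t-t'|\le\delta_n}|\tilde G_{\Lambda_n}(t)-\tilde G_{\Lambda_n}(t')|.
\]

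\textbf{Grid term.} A union bound and Chebyshev give
\[
\mathbb{P}\bigl(\max_{T_n}|\tilde G_{\Lambda_n}-1|>\varepsilon\bigr)\le\varepsilon^{-2}\delta_n^{-1}\sup_t\mathrm{Var}(\tilde G_{\Lambda_n}(t)).
\]
Borel--Cantelli then needs $\sum_n e^{\gamma n}v_n<\infty$, where $v_n=\sup_t\mathrm{Var}(\tilde G_{\Lambda_n}(t))$. Mere summability from (C-i) (Lemma~\ref{lem:varsum}, uniformly in $t$) is not enough to absorb the factor $e^{\gamma n}$, so this is where the argument is delicate.

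My first attempt would use the $L^p$ structure. If (C-i) can be read as giving $v_n\lesssim e^{-cn}$ for some $c>0$, as happens in the examples where the scale correlations decay geometrically, then choosing $\gamma<c$ closes the grid estimate.

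If only summability is available, I would instead avoid a growing grid at the fine scale. The alternative is to work at a fixed scale $\Lambda_n$ on a grid chosen through the modulus of continuity in Lemma~\ref{lem:holder}, and apply Chebyshev with $p$-th moments instead of variances, via Rosenthal or hypercontractivity-type bounds. I expect this balancing to be the main obstacle. It is the step where the proposition may require a slightly quantified version of (C-i).

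\textbf{Oscillation term.} By Lemma~\ref{lem:holder} and the Garsia--Rodemich--Rumsey form of Kolmogorov's criterion,
\[
\E\Bigl[\sup_{|t-t'|\le\delta}|G_\Lambda(t)-G_\Lambda(t')|^p\Bigr]^{1/p}\le C\,\delta^{\beta}\Lambda^{H-1},\qquad \beta=H-1/p .
\]
The same Lemma~\ref{lem:holder} argument applied to the deterministic function $m_n$ gives $|m_n(t)-m_n(t')|\le C|t-t'|^H\Lambda_n^{H-1}$. Writing $\tilde G(t)-\tilde G(t')$ as
\[
\frac{G(t)-G(t')}{m_n(t)}+G(t')\Bigl(\frac{1}{m_n(t)}-\frac{1}{m_n(t')}\Bigr),
\]
and dividing by $\inf_t m_n(t)\gtrsim e^{n(1-\sup\hb)}$, shows the oscillation term is of order $e^{-\gamma\beta n}e^{n(1-H)}e^{-n(1-\sup\hb)}$ in $L^p$. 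The second summand also needs $\sup_t G_{\Lambda_n}(t)/m_n$ bounded, which comes from the grid term. For $\gamma$ large, $\gamma\beta>(1-H)$, so Markov's inequality plus Borel--Cantelli makes this term tend to $0$ almost surely.

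The choice of $\gamma$ must be compatible with the grid estimate; this is the tension identified above. Finally, monotonicity of $\Lambda\mapsto G_\Lambda$ is only needed if one wants to pass from $e^{-n}$ to continuous $\Lambda$; the statement here concerns only the sequence $e^{-n}$.
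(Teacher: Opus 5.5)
Your proposal does not prove the proposition as stated, and the step you flag is a genuine gap. With mesh $\delta_n=e^{-\gamma n}$ and $v_n$ as you define it, the grid term needs $\sum_n e^{\gamma n}v_n<\infty$. Your oscillation estimate needs $\gamma\beta>(\sup_t\hb(t)-H)_+$, which forces $\gamma$ to be bounded below. Condition (C-i) gives only $\sum_n v_n<\infty$, and that does not imply $\sum_n e^{\gamma n}v_n<\infty$ for any $\gamma>0$ (take $v_n=n^{-2}$).

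Neither remedy you offer closes this. The first adds a hypothesis: geometric decay of $v_n$ at a rate $c>(\sup_t\hb(t)-H)_+/\beta$. The second has nothing to rest on. (C-i) is purely second-order, and (C-ii) controls increments in $t$, not fluctuations of $\tilde G_\Lambda(t)$ about $1$. Rosenthal needs independence or martingale structure across scales, and hypercontractivity needs Gaussian finite-chaos structure; neither is assumed. You also use (C-iii) and $\sup_t\hb(t)<1$ to bound $\E[G_{\Lambda_n}(t)]$ from below, and neither is a hypothesis of the proposition. At best you have the statement under strictly stronger assumptions.

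The paper avoids your tension by taking $\delta=(\varepsilon/2C)^{1/\beta}$ independent of $n$. The number of grid points is then fixed, and (C-i) alone makes the union bound summable. The cost is the between-grid claim $|\tilde G_{\Lambda_n}(t)-\tilde G_{\Lambda_n}(t_i)|\le C(\omega)\delta^\beta$ with $C$ not depending on $n$. Your own computation shows this cannot be read off Lemma~\ref{lem:holder}. The bound there carries the factor $\Lambda_n^{H-1}$, which is of order $\Lambda_n^{H-\hb}$ after normalization. The Kolmogorov constant is also random, so it cannot fix a deterministic grid size in a union bound. Borrowing the paper's step therefore does not close your gap.

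It helps to locate the gap precisely, because the union bound is dispensable. Lemma~\ref{lem:ptwise} holds simultaneously on a countable dense set of $t$'s, and uniform convergence then follows from almost sure asymptotic equicontinuity:
\[
\lim_{\delta\to0}\limsup_{n\to\infty}\sup_{|t-t'|\le\delta}\bigl|\tilde G_{\Lambda_n}(t)-\tilde G_{\Lambda_n}(t')\bigr|=0 .
\]
This is the ingredient both arguments actually need. When $\sup_t\hb(t)<H$, you can get it (using (C-iii)) by applying the Lemma~\ref{lem:holder}/Kolmogorov estimate block by block on $[e^{-k},e^{1-k}]$ and using Borel--Cantelli over $k$. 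Neither your proposal nor Lemma~\ref{lem:holder} as stated provides it in general.
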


\begin{proof}
Fix $\varepsilon>0$. Cover $[0,1]$ by $M=\lceil\delta^{-1}\rceil$
points $\{t_i\}$ with spacing $\delta$.

\emph{Grid points.}
$\mathbb{P}(\max_i|\tilde G_{\Lambda_n}(t_i)-1|>\varepsilon)
\le M\varepsilon^{-2}\sup_t\mathrm{Var}(\tilde G_{\Lambda_n})$.

\emph{Between grid points.}
By Lemma~\ref{lem:holder}, $|\tilde G_{\Lambda_n}(t)-\tilde G_{\Lambda_n}(t_i)|
\le C(\omega)\delta^\beta$.

\emph{Combine.}
Set $\delta=(\varepsilon/2C)^{1/\beta}$ so $M\lesssim\varepsilon^{-1/\beta}$:
\[
  \mathbb{P}\!\left(\sup_t|\tilde G_{\Lambda_n}(t)-1|>2\varepsilon\right)
  \le\frac{C'\sup_t\mathrm{Var}(\tilde G_{\Lambda_n})}{\varepsilon^{2+1/\beta}}.
\]
Sum over $n$ and apply Borel-Cantelli using (C-i). \qed
\end{proof}

\section{Recovery of $h_X$ and the level sets}

\begin{lemma}\label{lem:ptrecover}
Under locally stationary increments and (C-i) and (C-iii),
$h_X(t,\omega)=\hb(t)$ a.s.\ for each fixed $t$.
\end{lemma}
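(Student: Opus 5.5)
The approach is to go through the identity $h_X(t,\omega)=1+\lim_{\Lambda\to0}\log\GL(t,\omega)/\log\Lambda$ stated in the introduction, and to replace the random $\GL$ by its mean using Lemma~\ref{lem:ptwise}. Fix $t$. By Lemma~\ref{lem:ptwise} there is a null set $N_t$ off which $\tilde G_{e^{-n}}(t,\omega)\to1$. For $\Lambda\in[e^{-n-1},e^{-n}]$, monotonicity of $\GL$ in $\Lambda$ sandwiches $\GL(t)$ between $G_{e^{-n}}(t)$ and $G_{e^{-n-1}}(t)$. I will also check that $\E[G_{e^{-n-1}}]/\E[G_{e^{-n}}]$ stays bounded; this follows from (C-iii). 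Together these give
\[
  \log\GL(t,\omega)=\log\E[\GL(t)]+O(1)\quad\text{as }\Lambda\to0,
\]
off $N_t$. Since $\log\Lambda\to-\infty$, the $O(1)$ term disappears in the ratio. Hence, almost surely,
\[
  h_X(t,\omega)=1+\lim_{\Lambda\to0}\frac{\log\E[\GL(t)]}{\log\Lambda}.
\]

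Next I compute the deterministic limit from (C-iii). By Fubini, $\E[\GL(t)]=\int_\Lambda^1\E|\eth_sX(t)|\,s^{-1}ds$, and this is comparable, up to constants uniform in $t$, to $\int_\Lambda^1 s^{\hb(t)-2}\,ds$. There are three cases.
\begin{itemize}
\item If $\hb(t)<1$, the integral is $\asymp\Lambda^{\hb(t)-1}$, so the ratio tends to $\hb(t)-1$ and $h_X(t)=\hb(t)$.
\item If $\hb(t)=1$, the integral grows like $\log(1/\Lambda)$, whose logarithm is $o(\log\Lambda)$, so again $h_X(t)=1=\hb(t)$.
\item If $\hb(t)>1$, the integral converges to a positive constant, and the formula would give $h_X=1$.
\end{itemize}
The case $\hb(t)>1$ must therefore be excluded or handled separately. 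I would note that for $H\in(0,1)$ the wavelet macro-derivative setup forces $\hb\le1$, or else restrict the statement to that range.

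The role of locally stationary increments is to make the quantity $\hb(t)$ appearing in (C-iii) consistent with $\E[h_X(t,\cdot)]$. Once the previous step shows that $h_X(t,\cdot)$ is a.s. equal to the constant $1+\lim\log\E[\GL(t)]/\log\Lambda$, its expectation is that same constant. So the equality $\E[h_X(t)]=\hb(t)$ is self-consistent, and I will verify that the exponent from (C-iii) coincides with the one defined as $\E[h_X]$.

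The main obstacle is the identity $h_X=1+\lim\log\GL/\log\Lambda$ itself. It needs the residual $R_\Lambda X$ to be negligible compared with $\eth_\Lambda X\cdot\Lambda$, and it needs the $\liminf$ in $h_X$ to be captured by the integrated quantity rather than by an exceptional sequence of scales. I will take this identity as given from the HGDS framework \cite{hgds1,hgds2,hgds3}, as the introduction does. The point to watch is that the $\lim$ there exists pathwise. Under Lemma~\ref{lem:ptwise} it does, since the random limit is reduced to a deterministic one.
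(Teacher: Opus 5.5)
Your proposal is correct and takes the paper's own route: Lemma~\ref{lem:ptwise} to replace $\GL(t)$ by $\E[\GL(t)]$, (C-iii) for the growth of $\E[\GL(t)]$, and the introduction's identity $h_X=1+\lim_{\Lambda\to0}\log\GL/\log\Lambda$. Your monotonicity sandwich between $e^{-n-1}$ and $e^{-n}$ and your case split on $\hb(t)$ add care the paper omits, since its assertion $\E[\GL(t)]\asymp\Lambda^{\hb(t)-1}$ holds only for $\hb(t)<1$. The hedge on $\hb(t)>1$ is unnecessary: your first step shows $h_X(t,\cdot)$ is a.s.\ the deterministic constant $1+\lim_{\Lambda\to0}\log\E[\GL(t)]/\log\Lambda$, which is at most $1$ because $\E[\GL(t)]$ is positive and nondecreasing as $\Lambda\downarrow0$, so $\hb(t)=\E[h_X(t,\cdot)]$ equals that constant and the third case is vacuous (locally stationary increments play no separate role).
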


\begin{proof}
By Lemma~\ref{lem:ptwise}, $\tilde G_{e^{-n}}(t,\omega)\to1$ a.s.,
so $\GL(t,\omega)/\E[\GL(t)]\to1$ a.s. By (C-iii):
$\E[\GL(t)]\asymp\Lambda^{\hb(t)-1}$, so
\[
  \frac{\log\GL(t,\omega)}{\log\Lambda}
  = \frac{\log\E[\GL(t)]}{\log\Lambda}
  + \frac{\log\tilde G_\Lambda(t,\omega)}{\log\Lambda}
  \to(\hb(t)-1)+0 = \hb(t)-1\quad\text{a.s.}
\]
Hence $h_X(t,\omega)=1+(\hb(t)-1)=\hb(t)$ a.s. \qed
\end{proof}

\begin{lemma}\label{lem:unifrecover}
Under (C-i)--(C-iii) and locally stationary increments,
$\sup_t|h_X(t,\omega)-\hb(t)|\to0$ a.s.
\end{lemma}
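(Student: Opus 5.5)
The plan is to prove the uniform version of Lemma~\ref{lem:ptrecover}: a.s., simultaneously for all $t\in[0,1]$,
\[
  \sup_{t}\Bigl|\tfrac{\log\GL(t,\omega)}{\log\Lambda}-(\hb(t)-1)\Bigr|\to0
  \quad\text{as }\Lambda\to0,
\]
and then read off $h_X(t,\omega)=\hb(t)$ for every $t$ on one full-measure event. I interpret the statement in this sense, since the left side has no free parameter otherwise. The identity $h_X=1+\lim_{\Lambda\to0}\log\GL/\log\Lambda$ from the introduction is pathwise, so I use it for all $t$ at once. The proof follows the proof of Lemma~\ref{lem:ptrecover}, with Lemma~\ref{lem:ptwise} replaced by Proposition~\ref{prop:unif}.

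\emph{Step 1 (along $\Lambda_n=e^{-n}$).} Write
\[
  \log\GL(t)=\log\E[\GL(t)]+\log\Gtil(t).
\]
By Proposition~\ref{prop:unif} there is an a.s.\ event $\Omega_0$ on which $\sup_t|\tilde G_{\Lambda_n}(t)-1|\to0$. On $\Omega_0$, for $n$ large, $\sup_t|\log\tilde G_{\Lambda_n}(t)|\le 2\sup_t|\tilde G_{\Lambda_n}(t)-1|$. Dividing by $|\log\Lambda_n|=n$, this error term tends to $0$ uniformly in $t$.

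For the deterministic term, integrate (C-iii) in $s$. The uniform constants $c\,s^{\hb(t)-1}\le\E|\eth_sX(t)|\le C\,s^{\hb(t)-1}$ give
\[
  \E[\GL(t)]=\int_\Lambda^1\E|\eth_sX(t)|\,s^{-1}\,ds\asymp\Lambda^{\hb(t)-1},
\]
with constants independent of $t$. This needs $\hb(t)<1$ so that the integral is dominated by its lower limit. If $\hb(t)$ can reach $1$, the integral picks up a $\log(1/\Lambda)$ factor instead. That factor is still negligible after dividing by $\log\Lambda$, but I would need to check uniformity near $\hb=1$. Hence
\[
  \sup_t\Bigl|\tfrac{\log\E[\GL(t)]}{\log\Lambda}-(\hb(t)-1)\Bigr|=O(1/|\log\Lambda|).
\]

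\emph{Step 2 (interpolation).} For $\Lambda\in[\Lambda_{n+1},\Lambda_n]$, monotonicity of $\GL$ in $\Lambda$ gives
\[
  G_{\Lambda_n}(t)\le\GL(t)\le G_{\Lambda_{n+1}}(t).
\]
Also $\log\Lambda$ and $\log\Lambda_n$ differ by at most $1$ while both tend to $-\infty$. So the uniform limit along the geometric sequence transfers to $\Lambda\to0$ continuously, again uniformly in $t$. Combining Steps~1 and~2 with the pathwise identity gives $h_X(t,\omega)=\hb(t)$ for all $t$ on $\Omega_0$, and hence $\sup_t|h_X(t,\omega)-\hb(t)|=0$ a.s.

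\emph{Expected obstacle.} The main difficulty is the interchange of "for each $t$" and "a.s." The pointwise identity $h_X=1+\lim\log\GL/\log\Lambda$ must hold for every $t$ outside a single null set, not merely for each fixed $t$. I would handle this by checking that the decomposition into $\eth_\Lambda X$ and $R_\Lambda X$ holds identically in $(t,\omega)$, which should follow because it comes from a wavelet truncation. The Hölder control of Lemma~\ref{lem:holder}, which is already built into Proposition~\ref{prop:unif}, handles the continuum of $t$ on the $\GL$ side. Locally stationary increments are used only to ensure that $\hb$ is a well-defined, regular function, so that the uniform constants in (C-iii) are meaningful.
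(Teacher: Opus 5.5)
This is the paper's own argument. You split $\log\GL=\log\E[\GL]+\log\Gtil$, bound the mean term uniformly in $t$ via (C-iii), bound the fluctuation term via Proposition~\ref{prop:unif} along $\Lambda_n=e^{-n}$ (with the same bound $2\sup_t|\tilde G_{\Lambda_n}-1|/n$ the paper uses), and then read off $h_X$ from the introduction's identity $h_X=1+\lim_{\Lambda\to0}\log\GL/\log\Lambda$. Your interpolation between consecutive scales and your pathwise use of that identity on the single event $\Omega_0$ are slightly more careful than the paper's closing appeal to the fixed-$t$ Lemma~\ref{lem:ptrecover}, which alone cannot control uncountably many $t$, but the route and ingredients are the same.
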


\begin{proof}
By (C-iii), $\E[\GL(t)]\asymp\Lambda^{\hb(t)-1}$ uniformly. Write
$\log\GL=\log\E[\GL]+\log\tilde G_\Lambda$. Since
$\sup_t|\tilde G_{\Lambda_n}-1|\to0$ a.s.\ (Proposition~\ref{prop:unif})
and $n\to\infty$:
\[
  \sup_t\left|\frac{\log\tilde G_{\Lambda_n}(t,\omega)}{\log\Lambda_n}\right|
  \le\frac{2\sup_t|\tilde G_{\Lambda_n}(t,\omega)-1|}{n}\to0\quad\text{a.s.}
\]
Combined with Lemma~\ref{lem:ptrecover} the result follows uniformly. \qed
\end{proof}

\begin{lemma}\label{lem:hbarholder}
Under (C-ii) and locally stationary increments, $\hb$ is
$\beta$-H\"{o}lder with $\beta=H-1/p>0$.
\end{lemma}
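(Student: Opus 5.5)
The plan is to read $\hb$ off the first-moment profile $m_s(t):=\E[|\eth_s X(t)|]$ and to transfer the spatial regularity in (C-ii) from $\eth_s X$ to $m_s$. The first step is the moment comparison. By the reverse triangle inequality and Jensen/H\"{o}lder ($p>1$), (C-ii) gives
\[
  |m_s(t)-m_s(t')|\le \E\bigl[|\eth_s X(t)-\eth_s X(t')|^p\bigr]^{1/p}
  \le C|t-t'|^H s^{H-1},
\]
uniformly in $s\in(0,1]$. Integrating against $s^{-1}\,ds$ over $[\Lambda,1]$, exactly as in the proof of Lemma~\ref{lem:holder}, gives the same bound for $\E[\GL(t)]$:
\[
  \bigl|\E[\GL(t)]-\E[\GL(t')]\bigr|\le C'|t-t'|^H\Lambda^{H-1}.
\]

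The second step uses the identification from Lemma~\ref{lem:ptrecover}. Together with (C-iii), it yields
\[
  \hb(t)-1=\lim_{\Lambda\to0}\frac{\log \E[\GL(t)]}{\log\Lambda}.
\]
I would compare $\log\E[\GL(t)]$ and $\log\E[\GL(t')]$ at a scale $\Lambda$ coupled to $\delta=|t-t'|$, taking $\Lambda=\delta^{\alpha}$ for some $\alpha>0$ to be chosen. The relative error is
\[
  \frac{|\E[\GL(t)]-\E[\GL(t')]|}{\E[\GL(t)]}
  \lesssim \delta^{H}\Lambda^{H-1}\Lambda^{1-\hb(t)}
  =\delta^{H+\alpha(H-\hb(t))}.
\]
For $\alpha$ small, this is a positive power of $\delta$. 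Moreover, $\log(1+x)\le x$ turns it into a bound on the difference of logarithms.

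The third step is to divide by $|\log\Lambda|=\alpha|\log\delta|$ and control the remaining discrepancy between $\log\E[\GL(t)]/\log\Lambda$ and its limit $\hb(t)-1$. That discrepancy is the main obstacle. The relation $\asymp$ in (C-iii) only gives $\log\E[\GL(t)]=(\hb(t)-1)\log\Lambda+O(1)$. Dividing by $\log\Lambda$ then leaves an error of order $1/|\log\delta|$, which is only a logarithmic modulus of continuity, not $\delta^\beta$. To upgrade this, I would first try to make the $O(1)$ term itself $t$-regular. The locally stationary increments hypothesis says $t\mapsto\E[f(X(t+r)-X(t))]$ is differentiable. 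Applying it with truncations of $f(x)=|x|$ at the scales $r=e^{-n}$ should show that the ratio constant $c(t)=\lim_\Lambda \E[\GL(t)]\Lambda^{1-\hb(t)}$ varies differentiably, hence H\"{o}lder, in $t$. In that case the difference of the $O(1)$ terms at $t$ and $t'$ is $O(\delta^\beta)$ rather than $O(1)$.

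Once that is in place, the two error sources balance. The first is $\delta^{H+\alpha(H-\hb)}/(\alpha|\log\delta|)$; the second is $|c(t)-c(t')|/|\log\Lambda|$. Sending $\Lambda\to0$ independently of $\delta$ then kills the logarithmic denominators. The exponent finally recorded should be the Kolmogorov exponent $\beta=H-1/p$ inherited from (C-ii), obtained by comparing with the a.s.\ H\"{o}lder modulus of $\GL$ from Lemma~\ref{lem:holder} evaluated at the random identification $h_X(t,\omega)=\hb(t)$. If the regularity of $c(t)$ cannot be extracted from locally stationary increments, the fallback is to state only the weaker logarithmic modulus and check whether that suffices for the later dimension argument.
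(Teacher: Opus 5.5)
Your Step~1 is correct, and you have located the real obstruction, but the proposed repair does not get past it. At the coupled scale $\Lambda=\delta^\alpha$, the $O(1)$ term is $\log c_\Lambda(t)$ with $c_\Lambda(t)=\E[\GL(t)]\,\Lambda^{1-\hb(t)}$, not $\log c(t)$, and
\[
  \log c_\Lambda(t)-\log c_\Lambda(t')
  =\bigl(\log\E[\GL(t)]-\log\E[\GL(t')]\bigr)
   +\bigl(\hb(t')-\hb(t)\bigr)\log\Lambda .
\]
Step~1 already makes the first bracket $O(\delta^{H+\alpha(H-\hb(t))})$. So bounding the left side by $O(\delta^\beta)$ is the same as bounding $|\hb(t)-\hb(t')|\,|\log\Lambda|$ by a power of $\delta$, which is a H\"{o}lder estimate on $\hb$, i.e.\ what you are trying to prove. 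Regularity of the limit $c$ would only help together with a uniform polynomial rate for $c_\Lambda\to c$, and nothing provides that. The limit $c$ is not accessible anyway:
\begin{itemize}
\item (C-iii) gives only $\asymp$, so $c(t)$ need not exist.
\item Locally stationary increments concerns the laws of $X(t+r)-X(t)$ at each fixed $r$. These do not determine $\E|\eth_sX(t)|$, because the residual $R_sX$ intervenes, and the hypothesis carries no uniformity in $r$.
\end{itemize}
Finally, sending $\Lambda\to0$ at fixed $\delta$ contradicts your coupling. It disposes of the bounded $c_\Lambda$ term for free, so no regularity of $c$ is needed there. But your bound $\delta^H\Lambda^{H-\hb(t)}/|\log\Lambda|$ on the first error then diverges wherever $\hb(t)>H$. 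That is the generic case: (C-ii) and (C-iii) force $\hb\ge H$ everywhere unless $\hb$ is constant, since $\hb(t)<\hb(t')$ gives $s^{\hb(t)-H}\lesssim\delta^H$ for all small $s$.

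What this route actually delivers is your fallback, $|\hb(t)-\hb(t')|\lesssim 1/|\log|t-t'||$, and from first moments this cannot be improved. Suppose $\hb$ takes values in $[a,b]\subset(H,1)$ and has modulus of continuity $O(1/|\log\delta|)$ but is not H\"{o}lder. Let $g_s$ be the mollification of $\hb$ at spatial scale $s^{(a-H)/H}$ and set $m_s(t)=s^{g_s(t)-1}$. Then $m_s(t)\asymp s^{\hb(t)-1}$ uniformly in $t$, and $|m_s(t)-m_s(t')|\lesssim|t-t'|^Hs^{H-1}$. In particular, the exponent $H-1/p$ cannot come out of your argument.

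The paper argues differently. It combines the a.s.\ uniform convergence of Lemma~\ref{lem:unifrecover} with the a.s.\ H\"{o}lder regularity of $\GL(\cdot,\omega)$ from Lemma~\ref{lem:holder}, and passes the H\"{o}lder bound to the limit; your last sentence gestures at this. That transfer, however, needs a H\"{o}lder constant that is uniform in $\Lambda$ and survives $\log(\cdot)/\log\Lambda$. Lemma~\ref{lem:holder} instead carries the factor $\Lambda^{H-1}$. After dividing by $\GL\gtrsim\Lambda^{\hb-1}$ and by $|\log\Lambda|$, the constant is of order $\Lambda^{H-\hb}/|\log\Lambda|$, which is unbounded as $\Lambda\to0$ when $\hb>H$. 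So appealing to it does not close your gap, and the paper's own step lacks exactly this uniformity.
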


\begin{proof}
By Lemma~\ref{lem:unifrecover}, $h_X(t,\omega)\to\hb(t)$ uniformly
in $t$ a.s.\ as $\Lambda\to0$. By Lemma~\ref{lem:holder},
$t\mapsto\GL(t,\omega)$ is a.s.\ $\beta$-H\"{o}lder uniformly in
$\Lambda$. Since the convergence $h_X(\cdot,\omega)\to\hb(\cdot)$
is uniform in $t$ (Lemma~\ref{lem:unifrecover}), the H\"{o}lder
regularity of $\GL(t,\omega)$ passes to the limit: $\hb$ is
$\beta$-H\"{o}lder. \qed
\end{proof}

\begin{lemma}\label{lem:ballbound}
Under (C-ii), (C-iv), and Lemma~\ref{lem:hbarholder}, for any
$\varepsilon>0$ and $r\le1$:
\[
  |\mathcal{B}(t,r)\cap\{|\hb-h|<\varepsilon\}|
  \le C\varepsilon\cdot r^{\bar D(h)-\varepsilon}.
\]
\end{lemma}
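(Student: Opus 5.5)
The plan is to bound the sublevel set $S_\varepsilon=\{|\hb-h|<\varepsilon\}\cap\mathcal{B}(t,r)$ by a covering argument at a scale $\rho$ tied to $\varepsilon$ through the H\"{o}lder modulus of $\hb$. Lemma~\ref{lem:hbarholder} gives $|\hb(u)-\hb(v)|\le C_0|u-v|^\beta$. Choosing $\rho=(\varepsilon/C_0)^{1/\beta}$, every point $u\in S_\varepsilon$ with $\mathrm{dist}(u,E_h)\le\rho$, where $E_h=\{\hb=h\}$, lies in the $\rho$-neighbourhood of $E_h\cap\mathcal{B}(t,2r)$. I would therefore split $S_\varepsilon$ into two parts. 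The \emph{near part} lies within $\rho$ of $E_h$. The \emph{far part} consists of points where $\hb$ stays on one side of $h$ on a whole $\rho$-interval while remaining $\varepsilon$-close to $h$.

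For the near part, I would cover $E_h\cap\mathcal{B}(t,2r)$ by intervals of length $\rho$ and enlarge each by a factor $3$. This bounds the Lebesgue measure of the near part by $3\rho\cdot N_\rho(E_h\cap\mathcal{B}(t,2r))$, where $N_\rho$ is the $\rho$-covering number. Next I would invoke the definition of $\bar D(h)=\dim_H E_h$ to get $N_\rho(E_h\cap\mathcal{B}(t,2r))\lesssim (r/\rho)^{\bar D(h)+\varepsilon}$. Plugging in the choice of $\rho$ and absorbing powers of $\rho$ into $\varepsilon$ yields a bound of the shape $C\varepsilon\, r^{\bar D(h)-\varepsilon}$, after adjusting $\varepsilon$ by a constant factor, as permitted by the form of the statement. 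In this step (C-ii) enters only through Lemma~\ref{lem:hbarholder}.

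For the far part I would use (C-iv). Since $\hb$ is continuous and not constant on any interval, a compactness argument on $[0,1]$ should give a modulus $\eta(\varepsilon)\to0$ with the following property: every interval on which $|\hb-h|<\varepsilon$ throughout has length at most $\eta(\varepsilon)$. On such an interval, $\hb$ must either return to $h$ (which puts the interval in the near part) or exit the $\varepsilon$-band. Because of this dichotomy, the far part is covered by boundedly many short intervals per $\rho$-cell, and its measure is dominated by the same estimate as the near part.

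I expect two genuine obstacles. The first is that $\bar D(h)$ is a Hausdorff dimension, while the covering step needs box-counting (upper Minkowski) control of $E_h$ uniformly over balls. Hausdorff dimension alone does not give $N_\rho\lesssim\rho^{-\bar D(h)-\varepsilon}$. My first attempt would be to exploit the H\"{o}lder regularity of $\hb$ to show that its level sets are regular enough for the two dimensions to agree locally. The second obstacle is making the (C-iv) compactness modulus quantitative, namely polynomial in $\varepsilon$, so that it fits the $C\varepsilon$ prefactor. If this fails, the lemma likely has to be stated with an unspecified modulus $\eta(\varepsilon)$ in place of $C\varepsilon$.
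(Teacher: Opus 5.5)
There is a genuine gap, and it cannot be closed. Read with $C$ independent of $\varepsilon$, the inequality does not follow from the only properties of $\hb$ that your argument (or the paper's) uses: the $\beta$-H\"older bound of Lemma~\ref{lem:hbarholder} and (C-iv). Take $\hb(t)=h+(t-\tfrac12)^2$ on $[0,1]$. It is Lipschitz and not constant on any interval, and $\{\hb=h\}=\{\tfrac12\}$, so $\bar D(h)=0$. For $\varepsilon\le\tfrac14$, $t=\tfrac12$ and $r=1$, the left-hand side is $2\sqrt{\varepsilon}$ and the right-hand side is $C\varepsilon$, so the bound fails once $\varepsilon<4/C^2$. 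If instead $C$ may depend on $\varepsilon$, the lemma is empty: $|\mathcal{B}(t,r)|\le 2r\le 2r^{\bar D(h)-\varepsilon}$ because $\bar D(h)\le1$ and $r\le1$. Its only content is the linear prefactor, and that is exactly what fails.

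In your write-up the failure is in the step ``absorbing powers of $\rho$ into $\varepsilon$''. You have $3\rho\,(r/\rho)^{\bar D(h)+\varepsilon}=3r^{\bar D(h)+\varepsilon}\rho^{1-\bar D(h)-\varepsilon}$, and $\rho^{1-\bar D(h)-\varepsilon}=(\varepsilon/C_0)^{(1-\bar D(h)-\varepsilon)/\beta}$ is $O(\varepsilon)$ only if $1-\bar D(h)-\varepsilon\ge\beta$. Rescaling $\varepsilon$ by a constant cannot turn a power below one into a linear factor. The far-part step is also unsupported. H\"older continuity only puts the $\rho$-neighbourhood of $\{\hb=h\}$ inside the $\varepsilon$-band, not conversely. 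So far-part components need not lie near $\{\hb=h\}$ and cannot be charged to its $\rho$-cover. Moreover (C-iv) gives only a qualitative modulus, and in the example above even the polynomial modulus $2\sqrt{\varepsilon}$ is incompatible with $C\varepsilon$.

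Both obstacles you flag are therefore real obstructions, not technicalities. For the Hausdorff/box one, $K=\{0\}\cup\{1/n:n\ge1\}$ has $\dim_H K=0$ but $N_\rho(K)\asymp\rho^{-1/2}$. Then $\hb=h+\mathrm{dist}(\cdot,K)$ is $1$-Lipschitz, satisfies (C-iv), and has an $\varepsilon$-band of measure $\asymp\varepsilon^{1/2}$. Your fallback with an unspecified modulus $\eta(\varepsilon)$ fails too. Take $\hb=h+\mathrm{dist}(\cdot,F)$ with $F$ a fat Cantor set: (C-iv) holds and $\bar D(h)=1$. The band contains $F$ for every $\varepsilon$, so with $t=\tfrac12$, $r=1$ its measure stays $\ge|F|>0$ while $\eta(\varepsilon)\to0$.

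The paper's proof takes the same route as yours: cover $\{\hb=h\}\cap\mathcal{B}(t,r)$ by balls of radius $\rho=(\varepsilon/L)^{1/\beta}$ and multiply by $2\rho$. But it simply asserts three things:
\begin{itemize}
\item the converse inclusion;
\item a covering bound $N_r\le C(r/\rho)^{\bar D(h)+\delta}$ drawn from a Hausdorff dimension;
\item the same exponent arithmetic.
\end{itemize}
Its remark that (C-iv) makes the covering non-degenerate is refuted by the fat Cantor example. So the paper contains no idea that closes your gap. The lemma has to be reformulated before any proof can work. At minimum it needs uniform covering-number control of $\{\hb=h\}$, a quantitative lower bound on $|\hb-h|$ in terms of the distance to that set, and a prefactor given by the corresponding power of $\varepsilon$.
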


\begin{proof}
Since $\hb$ is $\beta$-H\"{o}lder with constant $L$, the set
$\{|\hb-h|<\varepsilon\}$ is covered by balls of radius
$\rho=(\varepsilon/L)^{1/\beta}$ around $\{\hb=h\}$. Cover
$\{\hb=h\}\cap\mathcal{B}(t,r)$ by
$N_r\le C(r/\rho)^{\bar D(h)+\delta}$ balls and multiply by the
Lebesgue measure $2\rho$ of each ball:
\[
  |\mathcal{B}(t,r)\cap\{|\hb-h|<\varepsilon\}|
  \le 2CN_r\rho
  \le C\varepsilon\,r^{\bar D(h)-\varepsilon}.
\]
Condition (C-iv) ensures $\{\hb=h\}$ has no interval component,
so the covering is non-degenerate. \qed
\end{proof}

\begin{lemma}\label{lem:frostman}
Under (C-i)--(C-iv) and locally stationary increments, for a.e.\
$\omega$ and every $\varepsilon>0$, there is a Borel probability
measure $\mu_\omega$ on $E(h,\omega)$ with
$\mu_\omega(\mathcal{B}(t,r))\le Cr^{\bar D(h)-\varepsilon}$.
\end{lemma}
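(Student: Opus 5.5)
The plan is to build $\mu_\omega$ as a weak limit of normalised Lebesgue measure on thickened level sets. On a full-measure event $\Omega_0$ we have uniform recovery (Lemma~\ref{lem:unifrecover}). This means that for every $\eta>0$ there is $\Lambda_0(\omega)$ such that the random level set $E(h,\omega)$ and the deterministic level set $\{\hb=h\}$ agree up to an $\eta$-tolerance in the exponent. So the first step is to fix $\omega\in\Omega_0$ and $\varepsilon>0$, and for $k\ge1$ set
\[
  F_k=\{t:|\hb(t)-h|<\varepsilon_k\},\qquad \varepsilon_k=\varepsilon 2^{-k}.
\]
We then define $\nu_k=|F_k|^{-1}\,\mathrm{Leb}|_{F_k}$. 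The sets $F_k$ are open (since $\hb$ is continuous by Lemma~\ref{lem:hbarholder}) and decreasing, and $\bigcap_k\overline{F_k}=\{\hb=h\}$. By uniform recovery, this set coincides with $E(h,\omega)$ for $\omega\in\Omega_0$, so any weak limit of $\nu_k$ is supported on $E(h,\omega)$. Prokhorov on $[0,1]$ gives a convergent subsequence $\nu_{k_j}\to\mu_\omega$.

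The second step is the Frostman bound for each $\nu_k$. By Lemma~\ref{lem:ballbound},
\[
  \nu_k(\mathcal{B}(t,r))\le \frac{C\varepsilon_k\,r^{\bar D(h)-\varepsilon}}{|F_k|}.
\]
So we need a matching lower bound $|F_k|\gtrsim\varepsilon_k$. I would get this from the H\"older continuity of $\hb$ together with (C-iv): since $\hb$ is non-constant on intervals and continuous, it takes values on both sides of $h$ near level points (when $\{\hb=h\}\neq\emptyset$). The intermediate value theorem then produces an interval on which $\hb$ ranges over $(h-\varepsilon_k,h+\varepsilon_k)$, whose length is at least $(\varepsilon_k/L)^{1/\beta}$. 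This is weaker than $\varepsilon_k$. I would compensate by running Lemma~\ref{lem:ballbound} with the exponent loss absorbed into $\varepsilon$, i.e.\ replace $\varepsilon$ by a smaller $\varepsilon'$ so that the ratio $\varepsilon_k/|F_k|$ is bounded by a power of $\varepsilon_k$ that can be traded against $r$ when $r\ge\rho_k$. For $r<\rho_k$ we use the trivial bound $\nu_k(\mathcal{B}(t,r))\le 2r/|F_k|$.

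The third step is passage to the limit. Balls are not continuity sets in general, so I would use the open-set portmanteau inequality $\mu_\omega(\mathcal{B}(t,r))\le\liminf_j\nu_{k_j}(\mathcal{B}(t,2r))$. This costs only a factor $2^{\bar D(h)}$ in $C$. The constant $C$ is then independent of $k$, hence valid for $\mu_\omega$. Measurability in $\omega$ is not required by the statement.

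The main obstacle is the second step, namely making the bound on $\nu_k$ uniform in $k$ across the two regimes $r\lessgtr\rho_k$. Lemma~\ref{lem:ballbound} gives the numerator with a factor $\varepsilon_k$, but the lower bound on $|F_k|$ from H\"older continuity only gives $\varepsilon_k^{1/\beta}$. If that mismatch cannot be absorbed, I would instead build $\mu_\omega$ by a mass-distribution (Cantor-type) construction directly on $\{\hb=h\}$. This would split mass dyadically among the sub-intervals that meet the level set, with counts controlled by the same covering numbers $N_r\le C(r/\rho)^{\bar D(h)+\delta}$ used in Lemma~\ref{lem:ballbound}. That route gives the Frostman exponent $\bar D(h)-\varepsilon$ directly.
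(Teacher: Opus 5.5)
There is a genuine gap in your second step, and it goes beyond the exponent mismatch you flag. Normalised Lebesgue measure on the thickenings $F_k$ weights each piece of $\{\hb=h\}$ by how \emph{flat} $\hb$ is there, not by the Hausdorff size of that piece. $\beta$-H\"older continuity together with (C-iv) says nothing about flatness.

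Here is a counterexample. Take $\hb(t)=h+\min\{\mathrm{dist}(t,K)^\beta,|t-t_0|^m\}$, where $K\subset[0,1/4]$ is a self-similar Cantor set of dimension $d\in(0,1)$, $t_0=3/4$, and $m>\max\{1,\beta/(1-d)\}$. This function is $\beta$-H\"older, is constant on no interval, and has level set $\{\hb=h\}=K\cup\{t_0\}$ of dimension $d$. For small $\varepsilon_k$, the part of $F_k$ near $K$ has measure $\lesssim\varepsilon_k^{(1-d)/\beta}$, while the part near $t_0$ has measure $2\varepsilon_k^{1/m}$. Hence $\nu_k\to\delta_{t_0}$, which violates $\mu(\mathcal{B}(t,r))\le Cr^{d-\varepsilon}$ for every $\varepsilon<d$. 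So no choice of estimates rescues this construction. The same example shows Lemma~\ref{lem:ballbound} cannot hold as stated: at $r=1$ it would force $|F_k|\lesssim\varepsilon_k$.

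Your absorption step also fails on its own terms. You get $\nu_k(\mathcal{B}(t,r))\lesssim\varepsilon_k^{1-1/\beta}r^{\bar D(h)-\varepsilon}$ with $1-1/\beta<0$. For fixed $r$ the prefactor blows up as $k\to\infty$, and the regime condition $r\ge\rho_k$ bounds $\varepsilon_k^{1-1/\beta}$ from below, not above.

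The fallback has the same flaw in another form. Upper bounds $N_r\le C(r/\rho)^{\bar D(h)+\delta}$ on covering numbers can never yield an \emph{upper} bound on ball masses. A Frostman bound needs the mass to be spread out, i.e.\ lower control on branching. Equal dyadic splitting among intervals meeting the set puts an atom on every isolated point of $\{\hb=h\}$, such as $t_0$ above.

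The missing ingredient is Frostman's lemma itself, which is how the paper's proof begins (via Howroyd's theorem). Since $\hb$ is continuous, $\{\hb=h\}$ is compact with $\dim_H=\bar D(h)$. Therefore $\mathcal{H}^{\bar D(h)-\varepsilon}(\{\hb=h\})=\infty>0$, and there is a finite nonzero Borel measure $\nu$ on $\{\hb=h\}$ with $\nu(\mathcal{B}(t,r))\le Cr^{\bar D(h)-\varepsilon}$. Granting, as you do, the identification $E(h,\omega)=\{\hb=h\}$ on the full-measure event of Lemma~\ref{lem:unifrecover}, you can simply take $\mu_\omega=\nu/\nu(\{\hb=h\})$ for every such $\omega$. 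No thickening, weak limit, or appeal to Lemma~\ref{lem:ballbound} is needed. For the same reason, the occupation-measure limit that follows Howroyd's theorem in the paper is superfluous.
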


\begin{proof}
By Howroyd's theorem \cite{howroyd1995}, since $\{\hb=h\}$ is Borel
with Hausdorff dimension $\bar D(h)$, there exists a Borel measure
$\nu$ on $\{\hb=h\}$ with $\nu(\mathcal{B}(t,r))\le Cr^{\bar D(h)-\varepsilon/2}$.

Define the occupation measure
\[
  \mu_{\Lambda,\omega}(A)
  = \frac{1}{|\log\Lambda|}
  \int_A\mathbf{1}\!\left[\left|\frac{\log\GL(t,\omega)}{\log\Lambda}
    -(\hb(t)-1)\right|<\varepsilon/4\right]dt.
\]
By Lemma~\ref{lem:unifrecover}, the indicator converges to
$\mathbf{1}[h_X(t,\omega)=h]$ uniformly a.s., so any subsequential
weak limit $\mu_\omega$ (which exists by compactness of probability
measures on $[0,1]$) is supported on $E(h,\omega)$.

For the ball bound: split $\mathcal{B}(t,r)$ into the far region
$\{|\hb-h|\ge\varepsilon/2\}$ and the near region
$\{|\hb-h|<\varepsilon/2\}$.

\emph{Far region.} On $\{|\hb-h|\ge\varepsilon/2\}$, the indicator
is zero for all large $n$ by Lemma~\ref{lem:unifrecover} (uniform
convergence), so this region contributes nothing to $\mu_\omega$.

\emph{Near region.} By Lemma~\ref{lem:ballbound}:
$|\mathcal{B}(t,r)\cap\{|\hb-h|<\varepsilon/2\}|\le C\varepsilon r^{\bar D(h)-\varepsilon/2}$.
The occupation measure of this region satisfies
$\mu_{\Lambda,\omega}(\mathcal{B}(t,r))\le Cr^{\bar D(h)-\varepsilon/2}$
uniformly, by the ball bound on $\nu$ passed through the weak
convergence of the occupation measure to $\nu$ on $\{\hb=h\}$.
By lower semicontinuity of the ball measure under weak convergence,
the bound passes to $\mu_\omega$:
$\mu_\omega(\mathcal{B}(t,r))\le Cr^{\bar D(h)-\varepsilon}$.

Frostman's theorem then gives $\dim_H E(h,\omega)\ge\bar D(h)-\varepsilon$.
Since $\varepsilon>0$ is arbitrary, $\dim_H E(h,\omega)\ge\bar D(h)$ a.s. \qed
\end{proof}

\section{Proof of the main theorem}

\begin{proof}[Proof of Theorem~\ref{thm:main}]
\emph{Measurability.} Proposition~\ref{prop:meas}.

\emph{Upper bound.} By Lemma~\ref{lem:unifrecover},
$h_X(t,\omega)\to\hb(t)$ uniformly a.s. So
$E(h,\omega)\subseteq\{t:|\hb(t)-h|<2\varepsilon\}$ for all large
$n$, giving $D(h,\omega)\le\bar D(h)$ after sending $\varepsilon\to0$.

\emph{Lower bound.} Lemma~\ref{lem:frostman} and Frostman's theorem
give $\dim_H E(h,\omega)\ge\bar D(h)$ a.s. \qed
\end{proof}

\section{Sharpness and examples}\label{sec:sharp}

\begin{proposition}
For log-correlated Gaussian fields, (C-i) fails:
$\mathrm{Var}(\tilde G_{e^{-n}})\to C>0$.
\end{proposition}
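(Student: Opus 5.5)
We need to show that for a log-correlated Gaussian field the normalized variance $\mathrm{Var}(\tilde G_{e^{-n}}(t))$ stays bounded below by a positive constant, so that the summands in (C-i) do not tend to zero and the series diverges. We work with the representation of Lemma~\ref{lem:varsum}: the variance is the ratio of the double covariance integral $\int_\Lambda^1\!\int_\Lambda^1|\mathrm{Cov}(\eth_s X(t),\eth_u X(t))|(su)^{-1}\,ds\,du$ to $(\int_\Lambda^1\E|\eth_s X(t)|s^{-1}ds)^2$. The plan is to compute both integrals asymptotically as $\Lambda=e^{-n}\to0$ and show they have the same order.

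\textbf{Step 1 (scale structure).} For a log-correlated field with kernel $\E[X(x)X(y)]\approx-\log|x-y|$, write the wavelet decomposition as a sum of independent (or weakly dependent) scale layers, each contributing variance $O(1)$ per octave. The macro derivative $\eth_s X(t)$ is then, after the factor $s$ in the HGDS splitting, a Gaussian variable with $\E|\eth_s X(t)|\asymp s^{-1}$. This is the $\hb\equiv0$ case of (C-iii). Hence the denominator is $(\int_\Lambda^1 s^{-2}ds)^2\asymp\Lambda^{-2}$, and it is dominated by scales $s$ within a constant factor of $\Lambda$.

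\textbf{Step 2 (covariance across scales).} For the log-correlated kernel, the correlation between scale-$s$ and scale-$u$ pieces at the same point decays only like a function of $\log(u/s)$, not as a power. Concretely, I would show $|\mathrm{Cov}(\eth_s X(t),\eth_u X(t))|\ge c\,(su)^{-1}\rho(s/u)$ with $\rho$ bounded below on $[1/2,2]$. For a genuine Gaussian field with a log kernel, the truncations at nearby scales share most of their variance, so nearby-scale correlations are comparable to $1$. Restricting the double integral to the block $[\Lambda,2\Lambda]^2$ already gives a numerator $\gtrsim\int_\Lambda^{2\Lambda}\!\int_\Lambda^{2\Lambda}(su)^{-2}\,ds\,du\asymp\Lambda^{-2}$.

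\textbf{Step 3 (conclude).} Combining the two steps gives $\mathrm{Var}(\tilde G_{e^{-n}})\ge c>0$. An upper bound of the same order follows from Cauchy--Schwarz. To extract an actual limit $C$, use the scale invariance of the log kernel: substituting $s=\Lambda a$, $u=\Lambda b$ makes the ratio converge to an explicit integral in $(a,b)\in[1,\infty)^2$, which is finite and positive. Since the summands tend to $C>0$, the series in (C-i) diverges.

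The main obstacle is Step 2. The HGDS truncation is not fully specified in the paper, so the lower bound on cross-scale correlation has to come from the log kernel through the wavelet coefficients. The first approach I would try is to compute $\mathrm{Cov}$ directly as $\sum_{j}$ of wavelet coefficient covariances between levels $j\le\log_2(1/s)$ and $j\le\log_2(1/u)$, using the fact that for log-correlated fields the level-$j$ coefficients have variance $\asymp 2^{0\cdot j}$ and are shared by both truncations.
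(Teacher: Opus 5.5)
Your argument is sound for what the proposition actually needs, namely that the summands of (C-i) do not tend to $0$. It works in the opposite regime from the paper's.

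\textbf{The paper's route.} The paper substitutes $s=e^{-x}$, $u=e^{-y}$ and postulates $\mathrm{Cov}(\eth_sX,\eth_uX)\sim\min(s,u)^{2H}|\log(s/u)|^{-\gamma}$. The numerator then becomes $\int_0^n\!\int_0^n e^{-2H\max(x,y)}|x-y|^{-\gamma}\,dx\,dy$, which is bounded in $n$, and the denominator converges. Everything is carried by the coarse scales $s\approx1$, and the limit $C$ comes for free because both pieces are monotone in $n$.

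\textbf{Your route.} You take $\E|\eth_sX(t)|\asymp s^{-1}$, i.e.\ (C-iii) with $\hb\equiv0$, under which numerator and denominator are both carried by the finest scales $s\approx\Lambda$. The block $[\Lambda,2\Lambda]^2$ gives the lower bound. Cauchy--Schwarz with $\sigma_s=\sqrt{\pi/2}\,\E|\eth_sX(t)|$ gives the upper bound $\pi/2$.

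\textbf{What your route buys.} Your normalization is the one compatible with the paper's own (C-iii). The paper's ansatz, by contrast, tends to $0$ as the scales shrink and is infinite at $s=u$. Your block bound also survives the fact that Lemma~\ref{lem:varsum} is not an identity. The true variance involves $\mathrm{Cov}(|\eth_sX|,|\eth_uX|)$, and for centred Gaussians $\mathrm{Cov}(|Z_1|,|Z_2|)=\tfrac{2}{\pi}\sigma_1\sigma_2\int_0^{|\rho|}\arcsin x\,dx\ge\tfrac{1}{\pi}\rho^2\sigma_1\sigma_2$. So near-diagonal correlation bounded below also bounds the genuine $\mathrm{Var}(\tilde G_{e^{-n}})$ from below.

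\textbf{First caveat: the limit $C$.} The rescaling $s=\Lambda a$ produces a genuine limit only if the joint law of $(\Lambda\,\eth_{\Lambda a}X(t))_{a\ge1}$ converges along $\Lambda=e^{-n}$. The log kernel is scale invariant, but a dyadic wavelet cutoff is invariant only under $\Lambda\mapsto\Lambda/2$. Since $n/\log 2$ is equidistributed modulo $1$, the ratio can oscillate log-periodically. What you prove robustly is $0<\liminf_n\le\limsup_n\le\pi/2$. That is all the failure of (C-i) requires; claim a limit only for a continuous-scale cutoff.

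\textbf{Second caveat: nothing here is logarithmic.} Steps~1--2 do not use the log kernel. The same block bound applies to any centred Gaussian $\eth_sX$ with $\E|\eth_sX|\asymp s^{\hb-1}$, $\hb<1$, and positively correlated nearby scales. Fractional Brownian motion is an example: there the block and the denominator are both $\asymp\Lambda^{2(H-1)}$. So your route shows the summands of (C-i) stay away from $0$ for fBm too, contradicting the remark after Theorem~\ref{thm:main}.

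The real mechanism has two parts. The near-diagonal correlation comes from the dominant layers at scale $\approx\Lambda$ shared by $\eth_sX$ and $\eth_uX$, not from truncations of $X$ sharing most of their variance. And the ratio stays bounded below because the denominator lives on $O(1)$ octaves near $\Lambda$, so there is nothing to average. You should credit Step~2 accordingly, and note that this argument cannot support the paper's sharpness claim.
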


\begin{proof}
For log-correlated fields,
$\mathrm{Cov}(\eth_s X,\eth_u X)\sim\min(s,u)^{2H}|\log(s/u)|^{-\gamma}$,
$\gamma>0$. Substituting $s=e^{-x}$, $u=e^{-y}$, the numerator
in (C-i) is bounded uniformly in $n$ while the denominator
stabilises. So $\mathrm{Var}(\tilde G_{e^{-n}})\to C>0$ and the
series diverges. \qed
\end{proof}

\begin{remark}
This shows (C-i) is sharp for this proof strategy. The log-correlated
case is handled by different methods \cite{bertacco2022}; whether a
single argument covers both remains open.
\end{remark}

\paragraph{Fractional Brownian motion.}
All four conditions hold for every $H\in(0,1)$ by self-similarity.
The spectrum is trivial: $D(H,\omega)=1$ a.s.

\paragraph{L\'{e}vy stable processes, index $\alpha\in(0,2)$.}
(C-i)--(C-iii) hold with $\E[|\eth_s X|]\asymp s^{1/\alpha-1}$.
(C-iv) holds since the spectrum is non-trivial. Theorem~\ref{thm:main}
recovers the known result.

\end{document}